\newcommand{\vp}{\varepsilon}
\newcommand{\bb}[1]{\mathbb{#1}}
\newcommand{\cl}[1]{\mathcal{#1}}
\newcommand{\ovl}{\overline}
\theoremstyle{plain}
\newtheorem{thm}{Theorem}[section]
\newtheorem{cor}[thm]{Corollary}
\theoremstyle{definition}
\theoremstyle{remark}
\newtheorem{rem}[thm]{Remark}
\numberwithin{equation}{section}
\def\E{\bb E}
\def\P{\bb P}
\begin{document}
\def\d{\delta}

\title{Tripartite Bell inequality, random matrices and trilinear forms}  
\author{Gilles Pisier}
\maketitle

\begin{abstract} In this seminar report,
we present in detail the proof of a recent result 
due to J. Bri\"et and T. Vidick, improving an estimate in a 2008 paper by
D. P\'erez-Garc\'{\i}a, M. Wolf, C. Palazuelos, I. Villanueva, and M. Junge, 
estimating the growth of  the deviation in the tripartite Bell inequality. The proof requires a delicate estimate of the norms
of   certain trilinear (or $d$-linear) forms on Hilbert space with coefficients
in the second Gaussian Wiener chaos. Let $E^n_{\vee}$ (resp. $E^n_{\min}$)  denote
$ \ell_1^n \otimes \ell_1^n\otimes \ell_1^n$ equipped with
the injective (resp. minimal) tensor norm. Here $ \ell_1^n$ is equipped
with its maximal operator space structure. The Bri\"et-Vidick method
yields that the identity map $I_n$ satisfies (for some $c>0$) $\|I_n:\  E^n_{\vee}\to E^n_{\min}\|\ge c n^{1/4} (\log n)^{-3/2}.$ Let $S^n_2$ denote
the (Hilbert) space of $n\times n$-matrices equipped with the Hilbert-Schmidt norm.
While a lower bound closer to $n^{1/2} $  is still open, their method
produces an interesting, asymptotically almost sharp, related estimate for the 
 map $J_n:\ S^n_2\stackrel{\vee}{\otimes} S^n_2\stackrel{\vee}{\otimes}S^n_2 \to \ell_2^{n^3} \stackrel{\vee}{\otimes} \ell_2^{n^3} $ taking
 $e_{i,j}\otimes e_{k,l}\otimes e_{m,n}$ to $e_{[i,k,m],[j,l,n]}$. 

\end{abstract}

\section{Tripartite Bell inequality}
We will prove the following theorem
due to J. Bri\"et and T. Vidick, improving an estimate in Junge etal. The proof
in \cite{BV} was kindly  explained to me in detail by T. Vidick. The improvements below (improving the power of the logarithmic term)
are routine refinements of the ideas in \cite{BV}.

\begin{thm}\label{thm1}  Let $Y^{(N)}$ be $N\times N$ a  Gaussian random matrix
with Gaussian entries all i.i.d. of mean zero and $L_2$-norm equal
to $N^{-1/2}$. Let $Y_j^{(N)}$ ($j=1,2,\cdots$) be a sequence of independent copies
of $Y^{(N)}$.
There is a constant C
such that with large probability we have
for all scalars $a_{i j}$ with $1\le i,j\le N$

$$\|\sum_{i,i'=1}^N a_{i i'} Y_i^{(N)}\otimes Y_{i'}^{(N)} \|\le C  (\log N)^{3/2}(\sum_{i,i'=1}^N |a_{i i'} |^2)^{1/2}$$

Equivalently,  let $g_{i,k,l}$   be i.i.d. Gaussian normal random variables, with 
${i,k,l}\le N$. Let  $g'_{i,k,l}$ be an independent copy of the family  $g_{i,k,l}$.
Then the norm $\|{\cl T}\|_{\vee}$  of the tensor $${\cl T}=\sum g_{i,k,l} \ g'_{i',k',l'} e_{i i'}\otimes e_{k k'} \otimes e_{l l'}$$
in the triple injective tensor product  $\ell^{N^2}_2 \stackrel{\vee}{\otimes} \ell^{N^2}_2 \stackrel{\vee}{\otimes} \ell^{N^2}_2$ satisfies for some $C$
$$\E \|{\cl T}\|_{\vee} \le  C (\log N)^{3/2} N $$

\end{thm}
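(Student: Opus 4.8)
The two formulations are equivalent: writing the injective norm of $\cl T$ as a supremum over unit vectors $x,y,z$ in $\ell_2^{N^2}$ and grouping coordinates as pairs $(i,i'),(k,k'),(l,l')$, one sees that $\|\cl T\|_\vee = \sup \|\sum_{i,i'} a_{ii'} Y_i\otimes Y_{i'}\|$ where the supremum is over a suitable family of $a$, after the substitution that turns the Gaussian families $g,g'$ into independent Gaussian matrices $Y_i^{(N)}$ of the stated normalization. So it suffices to prove the matrix-valued inequality
$$\Big\|\sum_{i,i'=1}^N a_{ii'}\, Y_i^{(N)}\otimes Y_{i'}^{(N)}\Big\| \le C(\log N)^{3/2}\Big(\sum_{i,i'} |a_{ii'}|^2\Big)^{1/2}$$
with large probability, uniformly over all scalar matrices $(a_{ii'})$; the expectation bound $\E\|\cl T\|_\vee \le C(\log N)^{3/2}N$ then follows by integrating the tail estimate (the factor $N$ coming from $\|a\|_{HS}\le N$ in the worst normalization and the change of normalization from $N^{-1/2}$-entries to standard Gaussians).

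The plan is a two-step symmetrization/decoupling argument. First, fix a realization of $(Y_i^{(N)})_{i\le N}$ and regard $M = \sum_{i,i'} a_{ii'} Y_i\otimes Y_{i'}$ as a bilinear form; by a standard decoupling inequality (Khinchin/Gaussian decoupling for chaos of order $2$) it is enough, up to universal constants, to control $\sum_{i,i'} a_{ii'} Y_i\otimes \tilde Y_{i'}$ where $(\tilde Y_{i'})$ is an independent copy. Conditioning on the first family, the inner sum $\sum_{i'} \big(\sum_i a_{ii'} Y_i\big)\otimes \tilde Y_{i'}$ is, for each fixed choice of the outer index data, a Gaussian series in the independent matrices $\tilde Y_{i'}$ with matrix coefficients $A_{i'} := \sum_i a_{ii'} Y_i$. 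The key tool is then the noncommutative Gaussian/Khinchin inequality (Lust-Piquard–Pisier; equivalently the matrix Bernstein / Chevet-type bound), which estimates $\E\|\sum_{i'} \tilde Y_{i'}\otimes A_{i'}\|$ — here $\tilde Y_{i'}$ is itself a random matrix rather than a scalar Gaussian, so one iterates once more, first replacing $\tilde Y_{i'}$ by $g_{i'} u_{i'}$-type terms is not available; instead one writes $\tilde Y_{i'} = \sum_{k,l} g'_{i'kl} e_{kl}/\sqrt N$ and treats the whole object as a single Gaussian chaos of order $2$ in the scalars $\{g_{ikl}\}\cup\{g'_{i'k'l'}\}$ with operator coefficients.

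Concretely, I would apply the operator-space/noncommutative version of the Chevet inequality for Gaussian chaos: for a sum $\sum g_\alpha g'_\beta T_{\alpha\beta}$ with $T_{\alpha\beta}$ operators, the expected norm is bounded by a combination of $\|\cdot\|_{row}$, $\|\cdot\|_{column}$, and mixed norms of the coefficient array, each of which unwinds — using the Hilbert–Schmidt normalization $\E|Y^{(N)}_{ij}|^2 = 1/N$ and the orthonormality encoded in $a_{ii'}$ — to something of order $\|a\|_{HS}$ times a polylogarithmic factor. The logarithms enter precisely through the passage from $L_2$-estimates of the relevant row/column norms to the operator norm: one uses a net argument over the unit ball, or equivalently the noncommutative Khinchin inequality in $S_p$ with $p\sim \log N$ and then $\|\cdot\|\le e\,\|\cdot\|_{S_{\log N}}$ on $N^{O(1)}$-dimensional matrix algebras; three ``layers'' (the outer randomness, the inner randomness, and the supremum defining the injective norm) each cost a factor $(\log N)^{1/2}$, giving the stated $(\log N)^{3/2}$.

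The main obstacle I anticipate is the bookkeeping in the decoupling/iteration: one must verify that after decoupling the order-$2$ chaos into a ``doubly Gaussian'' object and applying the noncommutative Khinchin inequality, all three of the resulting coefficient norms (the two ``square-function'' norms $\|(\sum T_{\alpha\beta}T_{\alpha\beta}^*)^{1/2}\|$, $\|(\sum T_{\alpha\beta}^*T_{\alpha\beta})^{1/2}\|$ and the mixed one) are genuinely $O(\|a\|_{HS})$ up to polylog — this is where the specific structure $T_{(i,k,l),(i',k',l')} = a_{ii'}\,(e_{kl}\otimes e_{k'l'})/N$ and the normalization must be used carefully, since a naive bound would lose an extra power of $N$. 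A secondary technical point is to convert the resulting $L_2$ (or $S_p$) bound into an ``with large probability'' statement uniform over \emph{all} coefficient matrices $a$ simultaneously; this is handled by a standard Gaussian concentration inequality for the (Lipschitz) function $a\mapsto \|\sum a_{ii'}Y_i\otimes Y_{i'}\|$ on the Euclidean sphere of $a$'s, combined with an $\vp$-net on that sphere, which contributes only another harmless logarithmic factor absorbed into the constant $C$.
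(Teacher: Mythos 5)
Your reduction between the two formulations is fine, and you have correctly identified the object as a decoupled Gaussian chaos of order~2 and the general engine (moment bounds at $p\sim\log(\text{cardinality of a net})$, then Markov plus union bound). But the final step of your plan has a genuine quantitative gap, and it is precisely the gap that the Bri\"et--Vidick argument reproduced in the paper is designed to close. The difficulty is not, as you suggest, bookkeeping of square-function norms: it is the uniformity over the coefficient matrix $a$ (equivalently, over the first tensor leg of $\|{\cl T}\|_{\vee}$). An $\vp$-net of the Euclidean unit sphere of $a$'s in $\ell_2^{N^2}$ has $\exp(cN^2)$ points. For a fixed $a$, the norm $\|\sum a_{ii'}Y_i\otimes Y_{i'}\|$ is a supremum of order-2 chaoses $\langle (R\otimes S\otimes T)g,g'\rangle$ whose $L_p$-moments (Lata\l a / Hanson--Wright) grow like $p^{1/2}\|R\otimes S\otimes T\|_2+p\,\|R\otimes S\otimes T\|_\infty$; to survive a union bound over $\exp(cN^2)$ points one must take $p\sim N^2$, and since $\|R\otimes S\otimes T\|_\infty$ can be of order $1$ for unit Hilbert--Schmidt vectors, this yields a deviation of order $N^2$, not $N(\log N)^{3/2}$. (Also, the norm is a degree-2 polynomial of the Gaussians, not a Lipschitz function of them, so the standard Lipschitz concentration you invoke does not apply; and Lipschitz dependence on $a$ has Lipschitz constant equal to the very quantity being bounded, so it only buys the harmless $(1+\vp)$ net factor, not the union bound.) Treating the three legs asymmetrically --- net over $a$, then a noncommutative Khinchin bound in $S$ and $T$ for each fixed $a$ --- cannot work for the same reason.

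The paper's proof instead treats all three legs by the projection decomposition: using the $S_{2,1}$ quasi-norm, the unit ball of each leg is replaced (at the total cost of the $(\log N)^{3/2}$ factor, via \eqref{3}--\eqref{5} --- this is where all the logarithms come from, not from three iterations of Khinchin in $S_{\log N}$) by convex combinations of normalized projections $r^{-1/2}P$. The point is the trade-off this creates in Lata\l a's bound: rank-$r$ projections admit $\delta$-nets of size only $\exp(c(\delta)rN)$, so one may take $p\sim (r+s+t)N$, while the normalized projections have operator norm $(rst)^{-1/2}$, so the dangerous term $p\,\|R\|_\infty\|S\|_\infty\|T\|_\infty\sim (r+s+t)N(rst)^{-1/2}$ stays of order $N$ when $rs>t=\max(r,s,t)$. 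The complementary case $rs\le t$ needs a separate trick (replace $T$ by $I$, using \eqref{2}, to shrink the net before applying the moment bound), and the whole thing is closed by a self-improvement absorbing the $\delta\|Z\|_{\bullet}$ error from the nets. None of this machinery appears in your outline, and without it (or some substitute for the net-size versus operator-norm trade-off) the approach you describe stalls at a bound of order $N^2$. As a side remark, your initial decoupling step is redundant: in the formulation with $g$ and $g'$ the chaos is already decoupled.
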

\begin{rem} Note that if we replace in ${\cl T}$ the random coordinates by
a family of  i.i.d. Gaussian normal variables indexed by $N^6$, then by well known estimates
(e.g. the Chevet inequality)   the corresponding random tensor, denoted by  $\hat {\cl T}$, satisfies
$\E \|\hat {\cl T}\|_{\vee} \le  C   N $.
\end{rem}

\begin{rem} 
Let $g$ be a Gaussian vector   in a finite dimensional (real) Hilbert space $H$ and let $g'$ be an independent copy of $g$.
We assume (for simplicity) that  the distribution of $g$ and  $g'$ is the canonical Gaussian measure on
$H$. 
Let $u_i\ (i=1,\cdots,M)$ be operators on $H$.
Let $Z_i=\langle u_i g , u_i g' \rangle$, and let
$\hat Z_i=\langle u_i g , u_i g\rangle - \E\langle u_i g, u_i g \rangle .$  We have then for any $p\ge 1$
\begin{equation}\label{1}  2^{-1}\| \sup_{i\le M}|\hat Z_i|  \|_p\le \| \sup_{i\le M}|Z_i| \|_p\le  \ \| \sup_{i\le M}|\hat Z_i| \|_p,
\end{equation}
and hence
\begin{equation}\label{2} \| \sup_{i\le M}\langle u_i g, u_i g  \rangle  \|_p\le  2 \| \sup_{i\le M}|Z_i| \|_p +
  \sup_{i\le M} \|u_i\|^2_{S_2} ,
\end{equation}
wher $\| \ \|_{S_2}$ denotes the Hilbert-Schmidt norm.
Indeed, (denoting $\approx$ equality in distribution)
we have
$$(g,g') \approx (2^{-1/2}(g+g'),  2^{-1/2}(g-g') ) .$$
Therefore 
$$\langle u_i g, u_i g\rangle  -\langle u_i g', u_i g'\rangle  \approx
 2 \langle u_i g , u_i g'\rangle.$$
Thus, if $\{\hat Z'_i\}$ is an  independent copy of $\{\hat Z_i\}$, we have
$$ \hat Z_i-\hat Z'_i \approx  2 Z_i.$$
From this \eqref{1} follows easily and \eqref{2} is an immediate consequence.
\end{rem}

\begin{rem}
\def\rk{ {\rm rk}}
Let us denote by $S_{2,1}(H)$ the class of operators $u$ on $H$ such that
the eigenvalues $\lambda_j$  of $ |u|$ (rearranged  as usual with multiplicity in non-increasing order)
satisfy 
$$\sum j^{-1/2} \lambda_j<\infty,$$
equipped with the quasi-norm (equivalent to a norm)
$$\|u\|_{2,1}=\sum j^{-1/2} \lambda_j<\infty.$$
It is clear that by Cauchy-Schwarz (for some constant $C$)
\begin{equation}\label{3} \|u\|_{2,1} \le C (\log \rk(u))^{1/2} \|u\|_{2}.\end{equation}

It is well known that the unit ball of $S_{2,1}(H)$ is equivalent (up to absolute constants)
to the closed convex hull of the set formed by all 
operators of the form $u=k^{-1/2} P$ where $P$ is a (orthogonal) projection of rank $k$. In particular,
for any $u$ we have
(for some constant $C>0$)
$$C^{-1}\|u\|_2\le \|u\|_{2,1}.$$
 
 Therefore, if $Z$ is a trilinear form on $S_{2,1}(H)$, 
 we have (for some constant $C$)
 \begin{equation}\label{4}\sup\{ |Z(r,s,t)|\mid \|r\|_{2,1}\le 1,\|s \|_{2,1}\le 1,\|t\|_{2,1}\le 1\}  \le
 C\sup_{k,l,m} (k l m)^{-1/2} \sup   |Z(R,S,T)|\ \end{equation}
 where the second supremum runs over all integers ${k,l,m}$
 and the third one over all projections $R,S,T$ of  rank respectively ${k,l,m}$.
 Let us denote by ${\cl P}(k)$ the set of projections of rank $k$.
By \eqref{3} letting $d=\dim(H)$, this implies (for some constant $C$)
 \begin{equation}\label{5}\sup_ {\|x\|_{2}\le 1,\|y \|_{2}\le 1,\|z\|_{2}  \le 1}  |Z(x,y,z)| \le C
 (\log d)^{3/2}
 \sup_{k,l,m}  (k l m)^{-1/2}  \sup_{(R,S,T)\in {\cl P}(k)\times {\cl P}(l)\times {\cl P}(m)}   |Z(R,S,T)|.
  \end{equation}
\end{rem}

\begin{proof}[Proof of Theorem]
We identify $\ell^{N^2}_2$ with the Hilbert-Schmidt class $S^{N^2}_2$.
Then, viewing $R$ as an operator (or matrix)  acting on $\ell^{N}_2$,   we denote by $\|R\|_2$ and $\|R\|_\infty$ respectively its
Hilbert-Schmidt  norm and   operator norm.
Let
$$Z(R,S,T)=\sum g_{i,k,l} \ g'_{i',k',l'}  R_{i i'} S_{k k'}  T_{l l'} $$
The norm of ${\cl T}$ is the supremum of $Z( R,S,T)$
over $R,S,T$ in the unit ball of $\ell^{N^2}_2$. 

Note that $Z(R,S,T)=\langle  (R \otimes S\otimes T )g, g' \rangle$
where $g,g'$ are independent canonical random vectors on $\ell_2^{N^3}$,
and $u= R \otimes S\otimes T$ is an operator on $\ell_2^{N^3}=\ell^{N}_2\otimes \ell^{N}_2\otimes \ell^{N}_2$.
 
 Fix integers $r,s,t$ and $\d>0$.
 Let ${\cl P}(r,s,t)={\cl P}(r)\otimes {\cl P}(s)\otimes {\cl P}(t)$. 
 Let ${\cl P}_\d(r)$ be a $\d$-net in ${\cl P} (r)$ with respect to the norm in $S_2$.
 It is easy to check that we can find
 such a net with at most $\exp\{c(\d)rN\}$ elements, so we may assume
 that $|{\cl P}_\d(r))|\le \exp\{c(\d)rN\}$.
 Let $${\cl P}_\d(r,s,t)={\cl P}_\d(r)\otimes {\cl P}_\d(s)\otimes {\cl P}_\d(t).$$
 Note that ${\cl P}_\d(r,s,t)$ is a $3\d$-net in ${\cl P}(r,s,t)$ and $|{\cl P}_\d(r,s,t)|\le \exp\{ 3 c(\d)(r+s+t)N\}$. 
 
 Let $$\|Z\|=\sup_ {\|x\|_{2}\le 1,\|y \|_{2}\le 1,\|z\|_{2}  \le 1}  |Z(x,y,z)| ,$$
 and
 $$\|Z\|_{\bullet}=\sup_ {\|x\|_{2,1}\le 1,\|y \|_{2,1}\le 1,\|z\|_{2,1}  \le 1}  |Z(x,y,z)| .$$
 Recall that by \eqref{5} $$\|Z\|_{\bullet} \le C ( \log N)^{3/2}\|Z\|.$$
 
\noindent {\bf Claim:} We claim that for some constant $C_\d$ we have
$$\| \sup_{(R,S,T)\in {\cl P}_\d(r,s,t)} (rst)^{-1/2}|Z(R,S,T)|  \|_N \le C_\d N .$$

We will use a bound
of Lata\l a (actually easy to prove in the bilinear case)
that says that for some absolute constant $c$        we have for all $p\ge 1$
$$\|Z(R,S,T)\|_p \le  c( p^{1/2} \|R\|_2\|S\|_2\|T\|_2 + p \|R\|_{\infty}\|S\|_{\infty}\|T\|_{\infty}).$$ 

\def\d{\delta}

Let us record here for further reference the obvious inequality

 \begin{equation}\label{7}
   \|\sup_{i\le M} |Z(R_i,S_i,T_i)|\|_q \le  M^{1/q} \sup_{i\le M} \|Z(R_i,S_i,T_i)|\|_q.
\end{equation}

When we take the sup over a family
$R_i,S_i,T_i$ indexed by $i=1,...,M$
in the unit ball of $S_2^N$ and such that for all $i$
we have $\|R_i\|_\infty \le r^{-1/2}$,  $\|S_i\|_\infty \le s^{-1/2}$, $\|T_i\|_\infty \le t^{-1/2}$,
this gives us
$$\|\sup_{i\le M} |Z(R_i,S_i,T_i)| \|_p \le c M^{1/p} (p^{1/2} +p (rst)^{-1/2}).$$
Choosing $p=\log M$  and $p\ge q$ we find a fortiori
 \begin{equation}\label{6}
   \|\sup_{i\le M} |Z(R_i,S_i,T_i)|\|_q \le c'   ((\log M)^{1/2} +(\log M) (rst)^{-1/2}).
\end{equation}

To prove the claim we may reduce to triples $(r,s,t)$
such that $t=\max(r,s,t)$. Indeed exchanging the roles of $(r,s,t)$,
we treat similarly the cases $r=\max(r,s,t)$ and $s=\max(r,s,t)$ and the desired result
follows with a tripled constant $C_\d$.

We will treat separately the sets\\  \centerline{ $A=\{(r,s,t)\mid rs>t, \ t=\max\{r,s\}\}$ and
$B=\{(r,s,t)\mid rs  \le t,  \ t=\max\{r,s\}\}$.}\\ Note that both sets have at most $N^3$ elements.

$\bullet$ 
Fix $(r,s,t)\in A$. By \eqref{6} with $q=N$, we have
$$\| \sup_{(R,S,T)\in {\cl P}_\d(r,s,t)} (rst)^{-1/2}|Z(R,S,T)|\|_N \le 
 c'   ((\log |{\cl P}_\d(r,s,t)|)^{1/2} +(\log |{\cl P}_\d(r,s,t)|) (rst)^{-1/2}),
$$
and hence
 \begin{equation}\label{8}\le c'  (3 c(\d)(r+s+t)N)^{1/2} + c'  (3 c(\d)(r+s+t)N)(rst)^{-1/2}.
\end{equation}
Now on the one hand $(r+s+t)N)^{1/2} \le 3^{1/2} N$ and on the other hand,
since we assume $ t =\max\{r,s\}$ and $rs>t $, we have $(r+s+t)N)(rst)^{-1/2}\le 3tN(rst)^{-1/2}\le 3N$.
So assuming $(r,s,t)\in A$ we obtain
$$\| \sup_{(R,S,T)\in {\cl P}_\d(r,s,t)} (rst)^{-1/2}|Z(R,S,T)| \|_N\le  3 c'  c(\d) (3^{1/2}+3) N .$$

$\bullet$ Now assume $(r,s,t)\in B$ and in particular $rs  \le  t$. 
 By Cauchy-Schwarz if $R,S,T$ are projections
 we have
 $|Z(R,S,T)| =\langle g, (R \otimes S  \otimes T )g'\rangle\le \langle g, (R \otimes S  \otimes T )g\rangle^{1/2} \langle g', (R \otimes S  \otimes T )g'\rangle^{1/2} .$
  We may write a fortiori  
 $$\|Z(R,S,T)|\|_p \le  |\langle g, (R \otimes S  \otimes T )g  \rangle  \|_p .$$
 Therefore by \eqref{2} and since $T\le I$ (and hence $ R \otimes S  \otimes T\le R \otimes S  \otimes I$)  we have
 $$\|Z(R,S,T) \|_p \le 2 \|Z(R,S,I) \|_p + rsN .$$
 Similarly we find
  \begin{equation}\label{9} \|\sup_{(R,S,T)\in {\cl P}_\d(r,s,t)}  |Z(R,S,T)| \|_p \le 2 \| \sup_{(R,S,I)\in {\cl P}_\d(r,s,N)} Z(R,S,I)|\|_p + rsN ,\end{equation}
 and hence by \eqref{6} again (we argue as for \eqref{8} above with $q=N$, but note however that in the present case   $T=I$ is fixed so
 the supremum over ${(R,S,I)\in {\cl P}_\d(r,s,N)}$  runs over at most $\exp\{ 3 c(\d)(r+s)N\}$ elements and we may use
  $p=3 c(\d)(r+s)N$) we find
  $$\|\sup_{(R,S,I)\in {\cl P}_\d(r,s,N)} (rst)^{-1/2}|Z(R,S,I)| \|_N \le
   c'  (3 c(\d)(r+s)N)^{1/2} (N/t)^{1/2}+ c'  (3 c(\d)(r+s)N)(rst)^{-1/2}.$$
 But now,  since we assume $rs  \le  t$, $r+s\le 2 rs \le 2t$  so that
 $(r+s)(rst)^{-1/2}\le 2 (rs/t)^{1/2}\le 2$, and hence we find 
  $\|\sup_{(R,S,I)\in {\cl P}_\d(r,s,N)} (rst)^{-1/2}|Z(R,S,I)| \|_N \le c_3(\d) N$.
  Substituting this in \eqref{9} yields
  $$\|\sup_{(R,S,T)\in {\cl P}_\d(r,s,t)}   (rst)^{-1/2}|Z(R,S,T)| \|_N \le 2c_3(\d) N +  (rs/t)^{1/2}N\le   (2c_3(\d) +1)N.$$
This completes the proof of the above claim.

Using the   claim, we conclude the proof as follows: To
pass from ${\cl P}_\d(r,s,t)$   to
 ${\cl P}(r,s,t)$  we first note that if (say) $P,P'$ are both projections of rank $r$, 
  by \eqref{3}  $\|P-P'\|_2 \le \d$ implies
 (for some constant $c_1$)  that 
 $r^{-1/2}\|P-P'\|_{2,1} \le c_1 \d$. Thus, using this for $r,s,t$ successively, we find 
 $$ \sup_{(R,S,T)\in {\cl P}(r,s,t)} (rst)^{-1/2}|Z(R,S,T)| \le  \sup_{(R,S,T)\in {\cl P}_\d(r,s,t)} (rst)^{-1/2}|Z(R,S,T)| +3 c_1\d  \|Z\|_{\bullet}.$$
 This implies
  $$\| \sup_{(R,S,T)\in {\cl P}(r,s,t)} (rst)^{-1/2}|Z(R,S,T)| \|_N \le  \|\sup_{(R,S,T)\in {\cl P}_\d(r,s,t)} (rst)^{-1/2}|Z(R,S,T)| \|_N +3 c_1 \d  \| \|Z\|_{\bullet}\|_N.$$
  Using \eqref{7} to estimate the sup over the $N^3$ intergers $r,s,t$ we find
  (recalling \eqref{4})
  $$   \| \|Z\|_{\bullet}\|_N
  \le CN^{3/N} \sup_{r,s,t}( \|\sup_{(R,S,T)\in {\cl P}_\d(r,s,t)} (rst)^{-1/2}|Z(R,S,T)| \|_N +3c_1 \d  \| \|Z\|_{\bullet}\|_N),$$
and hence by the claim
  $$ \le  8C C_\d N + 24 C c_1\d  \| \|Z\|_{\bullet}\|_N $$
   from which  follows that
  $$   \| \|Z\|_{\bullet}\|_N
  \le (1-24 c_1 C\d )^{-1} 8C C_\d N  .$$
  Observe that $  \E \|Z\| \le (\log N)^{3/2} \E  \|Z\|_{\bullet} \le   (\log N)^{3/2}  \| \|Z\|_{\bullet}\|_N$.
 Thus, if $\d$ is small enough,   chosen so that   say $24 c_1 C\d=1/2$,
 we finally obtain a fortiori
$$ \E \|Z\| \le 16 C C_\d   (\log N)^{3/2} N  .$$
Actually since we obtain the same bound for  $(\E \|Z\|^N)^{1/N}$
we also obtain for suitable positive constants $c_2,c_3$ that
$$\P \{\|Z\| > c_3 N (\log N)^{3/2} \}\le \exp{-c_2 N}.$$

\end{proof}

The Theorem has the following application improving a result in
Junge etal \cite{Jetal}:

\begin{thm}\label{thm2}  Consider the following two norms
for an element
$t=\sum\nolimits_{ijk} t_{ijk} e_i\otimes  e_j \otimes e_k$
in the triple tensor product $\ell_1^n \otimes \ell_1^n\otimes \ell_1^n$:
\begin{equation}\label{10}\|t\|_{\min}=\sup\{ \| \sum\nolimits_{ijk} t_{ijk} u_i\otimes  v_j \otimes w_k\|_{B(H\otimes_2 H  \otimes_2 H)  }\}\end{equation}
where the sup runs over all possible Hilbert spaces $H$ and all possible unitary operators
$u_i,v_j,w_k$ acting on $H$, and also:
\begin{equation}\label{11}\|t\|_{\vee}=\sup\{ |\sum\nolimits_{ijk} t_{ijk} x_i   y_j   z_k|\}\end{equation}
where the sup runs over all unimodular scalars $x_i,y_j, z_k$ or equivalently
the sup is as before but restricted to $\dim(H)=1$.
Let 
\begin{equation}\label{12}C_3(n)= \sup\{ \|t\|_{\min} \mid \|t\|_{\vee}\le 1\}.\end{equation}
Then we have for some constant $C'>0$ (independent of $n$)
$$C_3(n)\ge C' n^{1/4} (\log n)^{-3/2}.$$

\end{thm}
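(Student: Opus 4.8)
The strategy is to combine Theorem~\ref{thm1} with the construction of \cite{Jetal}, the only new ingredient being the sharper logarithmic factor supplied by Theorem~\ref{thm1}. I would take $n=N^2$ and identify the index set of each copy of $\ell_1^{N^2}$ with $[N]\times[N]$; let $t\in\ell_1^{N^2}\otimes\ell_1^{N^2}\otimes\ell_1^{N^2}$ be the tensor with coefficient array $t_{(i,i'),(k,k'),(l,l')}=g_{i,k,l}\,g'_{i',k',l'}$, i.e.\ the very array defining $\mathcal{T}$, now read in $\ell_1$ rather than in $\ell_2$. Then Theorem~\ref{thm2} will follow from an upper bound for $\|t\|_\vee$ and a lower bound for $\|t\|_{\min}$.

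For the classical value \eqref{11}: a unimodular vector of $\CC^{N^2}$, read as an $N\times N$ matrix, has Hilbert--Schmidt norm $N$, so each of the three suprema in \eqref{11} can be rescaled into the Hilbert--Schmidt unit ball of $S_2^N$; this gives $\|t\|_\vee\le N^3\,\|\mathcal{T}\|_\vee$, where $\|\mathcal{T}\|_\vee$ is the triple injective tensor norm bounded in Theorem~\ref{thm1}. Hence, with large probability, $\|t\|_\vee\le C\,N^4(\log N)^{3/2}$. This is the step where the refinement of Theorem~\ref{thm1} enters, and where the exponent $-3/2$ of $\log n$ in the conclusion originates.

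For the quantum value \eqref{10}: by convexity one may use contractions in place of unitaries, and one may take operators of product form $a_{(i,i')}=A_i\otimes A'_{i'}$, $b_{(k,k')}=B_k\otimes B'_{k'}$, $c_{(l,l')}=C_l\otimes C'_{l'}$ on product Hilbert spaces; then $\sum t_{(i,i'),(k,k'),(l,l')}\,a_{(i,i')}\otimes b_{(k,k')}\otimes c_{(l,l')}$ factors, up to a permutation of tensor legs, as $(\sum_{ikl}g_{ikl}A_i\otimes B_k\otimes C_l)\otimes(\sum_{i'k'l'}g'_{i'k'l'}A'_{i'}\otimes B'_{k'}\otimes C'_{l'})$, so that $\|t\|_{\min}\ge\big(\sup\|\sum_{ikl}g_{ikl}A_i\otimes B_k\otimes C_l\|\big)^2$, the supremum over contractions. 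Feeding in the quantum strategy of \cite{Jetal} for the Gaussian tensor $\mathbf{g}=(g_{ikl})$ --- a choice of operators that, going beyond the naive rank-one "corner" reading (which only gives $\|\mathbf{g}\|_2\sim N^{3/2}$), reaches $\gtrsim N^{9/4}$ --- one obtains $\|t\|_{\min}\gtrsim N^{9/2}$ with large probability.

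Combining, with large probability (hence for some realization of the Gaussians) $C_3(N^2)\ge\|t\|_{\min}/\|t\|_\vee\gtrsim N^{9/2}/\big(N^4(\log N)^{3/2}\big)=N^{1/2}(\log N)^{-3/2}$, and taking $N=\lfloor\sqrt{n}\rfloor$ gives $C_3(n)\ge C'\,n^{1/4}(\log n)^{-3/2}$. The only step requiring genuine work is the quantum lower bound $\sup_{A,B,C}\|\sum g_{ikl}A_i\otimes B_k\otimes C_l\|\gtrsim N^{9/4}$ --- producing operators that exploit the random tensor $\mathbf{g}$ well enough to beat the corner bound by a factor $N^{3/4}$ --- since the comparison $\|t\|_\vee\le N^3\|\mathcal{T}\|_\vee$ and the product-operator factorization are routine, and the logarithmic gain is inherited directly from Theorem~\ref{thm1}.
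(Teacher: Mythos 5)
There is a genuine gap, and it sits exactly where you flag ``the only step requiring genuine work'': the claimed quantum lower bound $\sup_{A,B,C}\|\sum g_{ikl}A_i\otimes B_k\otimes C_l\|\gtrsim N^{9/4}$ over contractions. This is not a result you can simply ``feed in'' from \cite{Jetal}: neither \cite{Jetal} nor \cite{BV} exhibits operators beating the rank-one ``corner'' bound $\|g\|_2\sim N^{3/2}$ for the \emph{raw} Gaussian tensor $\sum g_{ikl}e_i\otimes e_k\otimes e_l$ in $\ell_1^N\otimes\ell_1^N\otimes\ell_1^N$; producing such operators is essentially the open problem of showing that the Gaussian tensor itself is responsible for a large violation. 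With only the corner bound your route gives $\|t\|_{\min}\gtrsim N^3$ against $\|t\|_\vee\lesssim N^4(\log N)^{3/2}$, i.e.\ nothing. Your choice of $t$ (the raw coefficient array $g_{ikl}g'_{i'k'l'}$ in the canonical basis of $\ell_1^{N^2}$) is what forces you into this hard estimate.

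The paper's proof avoids it by choosing a \emph{different} tensor $t$: it expands $T=\sum g_{ikl}g'_{i'k'l'}e_{ii'}\otimes e_{kk'}\otimes e_{ll'}$ in an orthogonal basis $\{u_i\}$ of $S_2^{N}$ consisting of unitaries (an EPR basis) and takes $t=\sum\hat T_{ijk}\,e_i\otimes e_j\otimes e_k$ with $\hat T_{ijk}=N^{-3}\langle T,u_i\otimes u_j\otimes u_k\rangle$. Then the min-norm lower bound is trivial: substituting the unitaries $u_i,u_j,u_k$ themselves reproduces $T$, which as an operator on $(\CC^N)^{\otimes3}$ is rank one with norm $\|g\|_2\|g'\|_2\ge N^3/2$ with large probability. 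The price is paid on the classical side, where the normalization $\|\sum x_iu_i\|_2=N^{3/2}$ (rather than your $N$ per leg, which would give $N^3$ total) yields $\|t\|_\vee\le N^{3/2}\cdot CN(\log N)^{3/2}$, and the ratio is again $N^{1/2}(\log N)^{-3/2}=n^{1/4}(\log n)^{-3/2}$ up to constants. So your $\|t\|_\vee$ estimate and the product-operator factorization are fine as far as they go, but the decisive lower bound on $\|t\|_{\min}$ is unproved in your version, whereas the paper's choice of basis makes it a one-line consequence of the rank-one structure.
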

 \begin{rem} It is well known that the supremum in \eqref{10} is unchanged if we restrict
the supremum to finite dimensional spaces $H$.
Moreover, we have also
$$   \|t\|_{\min}=\sup\{ \| \sum\nolimits_{ijk} t_{ijk} u_i   v_j   w_k\|_{M_N  }\},$$
where the supremum runs over all $N$
and all $N\times N$-unitary matrices $u_i,v_j,w_k$
such that $u_iv_j=v_ju_i$, $u_iw_k=w_ku_i$
and $w_kv_j=v_jw_k$ for all ${i,j,k}$.\\
Note that \eqref{11} corresponds again to restricting this sup to $N=1$.

 \end{rem}
 \begin{proof}[Proof of Theorem \ref{thm2}] Let $n=N^2$.  We again identify ${\ell^{N^2}_2}$
 with the space of $N \times N$ matrices equipped with the HS norm.
 Let $\{ u_j\mid j\le N^2\}$ be
 an orthogonal  basis in ${\ell^{N^2}_2}$ 
 consisting of unitaries (this is   called an EPR basis in quantum information).
 Note that $\|u_j\|_2=\sqrt{N}$ for all $j$.
 Then 
 $u_i\otimes  u_j \otimes u_k$ $(i,j,k \le N^2 )$ forms an 
 orthogonal  basis in ${\ell^{N^2}_2}\otimes {\ell^{N^2}_2}\otimes {\ell^{N^2}_2}$.
 
Consider now $T \in {\ell^{N^2}_2}\otimes {\ell^{N^2}_2}\otimes {\ell^{N^2}_2}$
and let 
$$T=\sum \hat T_{ijk} u_i\otimes  u_j \otimes u_k$$
be its development on that orthogonal basis, so that
$\hat T_{ijk}=N^{-3}\langle T, u_i\otimes  u_j \otimes u_k\rangle$.
Consider now
$t=\sum\nolimits_{ijk} \hat T_{ijk}\  e_i\otimes  e_j \otimes e_k$.
 Then
for any unimodular scalars $x_i,y_j, z_k$ we have \\
$$\sum \hat T_{ijk} x_i\otimes  y_j \otimes z_k=  \langle T, X\otimes  Y\otimes Z\rangle$$
with $X=\sum x_i u_i$, $Y=\sum y_j u_j$, $Z=\sum z_k u_k$
and hence
$$\|t\|_{\vee}\le  N^{3/2}\sup\{ |\langle T, X\otimes  Y\otimes Z\rangle| \mid X,Y,Z \in B_{\ell^{N^2}_2}\}.$$ 
But since we also have
$T= \sum \hat T_{ijk} u_i\otimes  u_j \otimes u_k$, we have
$$\|t\|_{\min}\ge   \|T\|_{B({\ell^{N}_2}\otimes_2 {\ell^{N}_2}  \otimes_2 {\ell^{N}_2})  }.$$
Now consider as before
$${ T}=\sum g_{i,k,l} \ g'_{i',k',l'} e_{i i'}\otimes e_{k k'} \otimes e_{l l'}.$$
With this choice of ${ T}$  by the preceding Theorem we find with large probability
$\sup\{ |\langle T, X\otimes  Y\otimes Z\rangle| \mid X,Y,Z \in B_{\ell^{N^2}_2}\} \le C N (\log N)^{3/2} ,$
and hence
$$\|t\|_{\vee}\le  C N^{5/2}(\log N)^{3/2},$$
but also (since $T$ is a rank one operator)
$\|T\|_{B({\ell^{N}_2}\otimes_2 {\ell^{N}_2}  \otimes_2 {\ell^{N}_2})  }=(\sum |g_{i,k,l}|^2)^{1/2}  (\sum |g'_{i,k,l}|^2)^{1/2}$ and the latter is  concentrated around its mean and hence with large probability
$\ge  N^3/2$.
Thus we conclude that
$$C(N^2)\ge (2C)^{-1}N^{1/2} (\log N)^{-3/2}. $$
 \end{proof}
 \begin{rem}\label{rem7} The same method works in the $d$-linear case.
    Consider
$$T=\sum g_{i(1),...,i(d)}   g'_{i'(1),...,i'(d)}   e_{i(1)i'(1)} \otimes\cdots  \otimes e_{i(d)i'(d)}\in \ell^{N(1)^2}_1 \otimes\cdots  \otimes \ell^{N(d)^2}_1.$$
Let $\{u_{i}^{(j)}\mid 1\le i\le N(j)^2\}$ be an orthogonal basis in $S^{N(j)}_2$ formed of unitary matrices.
We will denote by ${\underline{ i}}$ the elements of the set $\underline{ I}=[N(1)^2\times \cdots \times N(d)^2]$.
Let
$$T=\sum \hat T(\underline{ i}) u_{\underline{ i}}$$
be its orthogonal development according to the basis
formed by
$$\forall \underline{ i} =(\underline{ i}(1),\cdots,\underline{ i}(d)\in \underline{ I}\qquad  u_{\underline{ i}} =u_{\underline{ i}(1)}^{(1)}\otimes \cdots \otimes  u_{\underline{ i}(d)}^{(d)},$$
so that
$$\hat T(\underline{ i})={(N(1)\cdots N(d))}^{-1} \langle T ,  u_{\underline{ i}}\rangle.$$   

Let us denote also by   $e_{\underline{ i}}=e_{\underline{ i}(1)} \otimes \cdots \otimes  e_{\underline{ i}(d)}$ the canonical basis in $\ell^{N(1)^2}_1 \otimes\cdots  \otimes \ell^{N(d)^2}_1$.
Let then 
$$t=\sum \hat T(\underline{ i}) e_{\underline{ i}}.$$
As above, on one hand since $T$ appears as an operator of rank one, we have
$$\|t\|_{\min}\ge   \|T\|_{B({\ell^{N(1)}_2} \otimes_2\cdots  \otimes_2  {\ell^{N(d)}_2})  }\ge (\sum  |g_{i(1),...,i(d)}|^2)^{1/2}  (\sum  |g'_{i(1),...,i(d)}|^2)^{1/2}  .$$
On the other hand using the  orthogonality of the $u_{\underline{ i}} $'s we have
$$\|t\|_{\vee}\le  {(N(1)\cdots N(d))}^{1/2} \|T\|_{\ell^{N(1)^2}_2 {\buildrel {\vee}\over {\otimes}}\cdots  
{\buildrel {\vee}\over {\otimes}} \ell^{N(d)^2}_2}. $$ 
Thus we find
$$C_d({N(1)^2}, \cdots,{N(d)^2})\geq     {(N(1)\cdots N(d))}^{-1/2}  \left(\sup \|T\|_{\ell^{N(1)^2}_2 {\buildrel {\vee}\over {\otimes}}\cdots  
{\buildrel {\vee}\over {\otimes}} \ell^{N(d)^2}_2}\right)^{-1},$$
where the sup runs over all $T$ of the above form (i.e. of rank one in a suitable sense)
such that $(\sum  |g_{i(1),...,i(d)}|^2)^{1/2}  (\sum  |g'_{i(1),...,i(d)}|^2)^{1/2} \le 1$
(i.e. of norm one in a suitable sense).
Then using Gaussian variables as above, we obtain 
$  \|t\|_{\min}\ge c N^d$ and $  \|t\|_{\vee}\le c N^{d/2} (N (\log N)^{d/2})$
$$C_d({N^2}, \cdots,{N^2})\geq   c_d N^{d/2-1} (\log N)^{-d/2}.$$
\end{rem}
\section{An almost sharp inequality}
Let $(H_j)$ and $(K_j)$  ($1\le j\le d$) be $d$-tuples  of finite dimensional Hilbert spaces.
Let
$$ J:\ (H_1\otimes_2 K_1)\buildrel{\vee}\over{\otimes} \cdots\buildrel{\vee}\over{\otimes} (H_d\otimes_2 K_d) \to (H_1  \otimes_2\cdots \otimes_2 H_d) \buildrel{\vee}\over{\otimes}  (K_1  \otimes_2\cdots \otimes_2 K_d), $$
be the natural identification map. After reordering, we may as well assume that
the sequence $ \{ \dim(H_j)\dim(K_j)\mid 1\le j\le d\} $  is non-decreasing.

We have
\begin{equation}\label{eq0}\|J\|\le \prod_{j=1}^{d-1} (\dim(H_j)\dim(K_j))^{1/2} .\end{equation}
Indeed,  it is easy to check that, for any normed space $E$, the identity map
$\ell_2^n \buildrel{\vee}\over{\otimes} E \to \ell_2^n(E)$ has norm $\le \sqrt{n}$.
This gives
$$\| (H_1\otimes_2 K_1)\buildrel{\vee}\over{\otimes} \cdots\buildrel{\vee}\over{\otimes} (H_d\otimes_2 K_d) 
\to (H_1\otimes_2 K_1)\otimes_2   \cdots\otimes_2(H_d\otimes_2 K_d) \|\le \prod_{j=1}^{d-1} (\dim(H_j)\dim(K_j) )^{1/2}.$$
A fortiori we obtain the above bound \eqref{eq0} for $J$.

Consider now the case when $\dim(H_j)=\dim(K_j)=N $ for all $j$. In that case the preceding bound becomes
$$\|J\|\le N^{d-1}.$$
Consider again
$$T=\sum g_{i(1),...,i(d)}   g'_{i'(1),...,i'(d)}   e_{i(1)i'(1)} \otimes\cdots  \otimes e_{i(d)i'(d)}\in \ell^{N^2}_2 \otimes\cdots  \otimes \ell^{N^2}_2,$$
where we identify $H_j\otimes_2 K_j$ with $ \ell^{N^2}_2$. The preceding proof
yields with large probability
$$\|T\|_{\ell^{N^2}_2 \buildrel{\vee}\over{\otimes} \cdots\buildrel{\vee}\over{\otimes} \ell^{N^2}_2}\le c  N (\log N)^{d/2}$$
and
also  
$$\|T\|_{ (H_1  \otimes_2\cdots \otimes_2 H_d) \buildrel{\vee}\over{\otimes}  (K_1  \otimes_2\cdots \otimes_2 K_d)}\ge c ' N  ^{d}.$$Thus we obtain the following almost sharp (i.e. sharp up to the log factor)
$$\|J\|\ge c''  N^{d-1}(\log N)^{-d/2}.$$
The argument described in the preceding Remark \ref{rem7} boils down to the estimate
$$  C_d(N^2,\cdots ,N^2)  \ge N^{-d/2}\|J\| .$$

\begin{rem}Note that the preceding proof actually yields  (assuming $\dim(H_j)=\dim(K_j)=N $ for all $j$)
$$ c''  N^{d-1}(\log N)^{-d/2}\le \| (H_1\otimes_2 K_1)\buildrel{\vee}\over{\otimes} \cdots\buildrel{\vee}\over{\otimes} (H_d\otimes_2 K_d) 
\to (H_1\otimes_2 K_1)\otimes_2   \cdots\otimes_2(H_d\otimes_2 K_d) \|\le N^{d-1}.$$
However, this norm is much easier to estimate and,  actually, we claim  it is  $\ge  c_d N^{d-1}.$\\
Indeed, returning to $H_j,K_j$ of arbitrary finite dimension,   let $n_j=\dim(H_j)\dim(K_j)$.\\
Assume $n_1\le n_2\le \cdots\le n_d$. Consider then the inclusion
$$\Phi:\ \ell_2^{n_1}\buildrel{\vee}\over{\otimes} \cdots \buildrel{\vee}\over{\otimes} \ell_2^{n_d} \to \ell_2^{n_1\cdots n_d}.$$ 
The above easy argument shows that $\|\Phi\|\le ({n_1\cdots n_{d-1}})^{1/2}$.
Let now $G$ be a   random vector with values in $\ell_2^{n_1\cdots n_d}$ distributed according
to the canonical Gaussian measure. We will identify $G$ with $\Phi^{-1}(G)$. Then, by Simone Chevet's well known inequality
we have
$$\E \|G\|_{\ell_2^{n_1}\buildrel{\vee}\over{\otimes} \cdots \buildrel{\vee}\over{\otimes} \ell_2^{n_d}}
\le \sqrt{d} \sum_j \sqrt{n_j}\le d^{3/2} n_d,$$
while it is clear that $\E \|G\|^2_{ \ell_2^{n_1\cdots n_d}}= {n_1\cdots n_d}$. From this follows
that $\|\Phi\|\ge d^{-3/2} ({n_1\cdots n_{d-1}})^{1/2}$. In particular the above claim is established.
\end{rem}
\section{A different method}
It is known (due to Geman) that
\begin{equation}\label{eq13.4}
 \lim_{N\to\infty} \|Y^{(N)}\|_{M_N} = 2 \quad \text{a.s.}
\end{equation}
Let $(Y^{(N)}_1, Y^{(N)}_2,\ldots)$ be a sequence of independent copies of $Y^{(N)}$, so that the family $\{Y^N_k(i,j)\mid k\ge 1, 1\le i,j\le N\}$ is an independent family of $N(0,N^{-1})$ complex Gaussian.

The next two statements follow from results  known to Steen Thorbj{\o}rnsen since at least 1999 (private communication). See \cite{HT3} for closely related results. We present a trick that yields a  self-contained derivation of this.  
  
\begin{thm}\label{thm13.10}
Consider independent copies $Y'_i = Y^{(N)}_i(\omega')$ and
$
 Y''_j = Y^{(N)}_j(\omega'')$    {for} $ (\omega',\omega'')\in\Omega\times \Omega$.
 Then, for any $n^2$-tuple of scalars $(\alpha_{ij})$, we have
\begin{equation}\label{eq13.16bis}
 \underset{N\to\infty}{\ovl{\lim}}\left\|\sum \alpha_{ij}Y^{(N)}_i(\omega') \otimes Y^{(N)}_j(\omega'')\right\|_{M_{N^2}} \le 4(\sum|\alpha_{ij}|^2)^{1/2}
\end{equation}
for a.e.\ $(\omega',\omega'')$ in $\Omega\times \Omega$.\end{thm}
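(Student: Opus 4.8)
The plan is to combine Fubini with conditioning to reduce the statement about the bilinear expression in the two independent Gaussian families to a statement about a \emph{single} Gaussian family tensored with deterministic matrices, and then to establish for the latter a log-free norm bound by a moment (genus-expansion) computation; this last ingredient is, I believe, the self-contained ``trick'' the authors allude to. Write $\alpha=(\alpha_{ij})$ and split
\[
\sum_{ij}\alpha_{ij}Y^{(N)}_i(\omega')\otimes Y^{(N)}_j(\omega'')=\sum_i Y^{(N)}_i(\omega')\otimes A^{(N)}_i(\omega''),\qquad A^{(N)}_i:=\sum_j\alpha_{ij}Y^{(N)}_j .
\]
As an optional normalization (it shortens the bookkeeping) one may first absorb the singular-vector unitaries of $\alpha=U\Sigma V^{*}$ into the two families — which by rotation invariance of the complex Gaussian ensemble leaves their joint law unchanged — and assume $\alpha=\mathrm{diag}(\sigma_i)$ with $\sigma_i\ge0$, $\sum_i\sigma_i^{2}=\|\alpha\|_2^{2}$, $\max_i\sigma_i=\|\alpha\|_{\mathrm{op}}\le\|\alpha\|_2$.

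The technical core is: if $(Z^{(N)}_\ell)_{\ell\le n}$ are independent $N\times N$ complex Gaussian matrices normalized as in the statement and $B^{(N)}_\ell\in M_{k_N}$ are deterministic, with $k_N$ allowed to grow, then
\[
\underset{N\to\infty}{\ovl{\lim}}\ \E\Big\|\sum_{\ell}Z^{(N)}_\ell\otimes B^{(N)}_\ell\Big\|_{M_{Nk_N}}\ \le\ \underset{N\to\infty}{\ovl{\lim}}\Big(\big\|{\textstyle\sum_\ell}(B^{(N)}_\ell)^{*}B^{(N)}_\ell\big\|^{1/2}+\big\|{\textstyle\sum_\ell}B^{(N)}_\ell(B^{(N)}_\ell)^{*}\big\|^{1/2}\Big).
\]
The right-hand side is exactly $\|\sum_\ell c_\ell\otimes B^{(N)}_\ell\|$ for a free circular family $(c_\ell)$: realize $c_\ell=L_\ell+M_\ell^{*}$ by creation operators of pairwise orthogonal ranges on the full Fock space, so that $\|\sum_\ell L_\ell\otimes b_\ell\|=\|\sum_\ell b_\ell^{*}b_\ell\|^{1/2}$, $\|\sum_\ell M_\ell^{*}\otimes b_\ell\|=\|\sum_\ell b_\ell b_\ell^{*}\|^{1/2}$, and add. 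On the matrix side I would bound $\E\|X^{(N)}\|\le(\E\,\mathrm{Tr}((X^{(N)*}X^{(N)})^{p}))^{1/2p}$, expand the trace by Wick's formula (each $Z_\ell$ pairs with some $Z^{*}_{\ell}$), observe that the non-crossing pairings reproduce $Nk_N$ times a $\tau$-moment of the Fock operator $\sum_\ell c_\ell\otimes B^{(N)}_\ell$ — hence contribute at most $Nk_N\,\sigma_N^{2p}$ with $\sigma_N$ the displayed right side — while each crossing costs a factor $O(p^{2}/N^{2})$ relative to that; taking $p\sim\varepsilon^{-1}\log(Nk_N)$ makes $(Nk_N)^{1/2p}\le e^{\varepsilon/2}$ and kills the crossing corrections in the limit, and $\varepsilon\downarrow0$ gives the bound.

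Granting this, I conclude as follows. By Geman's theorem \eqref{eq13.4} and by the core estimate applied to the single family $(Y^{(N)}_j)$ with the \emph{fixed-size} packagings $\sum_i A^{(N)}_i(A^{(N)}_i)^{*}=DD^{*}$, $D=\sum_j\tilde a_j\otimes Y^{(N)}_j$ with $\tilde a_j=(\alpha_{1j},\dots,\alpha_{nj})\in M_{1,n}$ (and likewise $\sum_i(A^{(N)}_i)^{*}A^{(N)}_i=\tilde D^{*}\tilde D$), together with Gaussian concentration and Borel--Cantelli, there is a full-measure set of $\omega''$ on which $\underset{N}{\ovl{\lim}}\,\|\sum_i A^{(N)}_i(A^{(N)}_i)^{*}\|$ and $\underset{N}{\ovl{\lim}}\,\|\sum_i(A^{(N)}_i)^{*}A^{(N)}_i\|$ are each $\le(\|\alpha\|_{\mathrm{op}}+\|\alpha\|_2)^{2}$, these being the values forced by the free circular model. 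Fixing such an $\omega''$ and applying the core estimate in the variable $\omega'$ with $B^{(N)}_i=A^{(N)}_i(\omega'')$ gives $\underset{N}{\ovl{\lim}}\,\E_{\omega'}\|\sum_{ij}\alpha_{ij}Y^{(N)}_i\otimes Y^{(N)}_j\|\le2(\|\alpha\|_{\mathrm{op}}+\|\alpha\|_2)\le4\|\alpha\|_2$. Finally, $\omega'\mapsto\|\sum_{ij}\alpha_{ij}Y^{(N)}_i\otimes Y^{(N)}_j\|$ is, in rescaled Gaussian coordinates, Lipschitz with constant $O(N^{-1/2})$ (the implied constant controlled by $\max_i\|A^{(N)}_i\|_{\mathrm{op}}$, bounded on the good set of $\omega''$ since $\|Y^{(N)}_j\|\le3$ eventually), so $\P_{\omega'}(\|\cdot\|>\E_{\omega'}\|\cdot\|+t)\le e^{-cNt^{2}}$; Borel--Cantelli yields $\underset{N}{\ovl{\lim}}\,\|\sum_{ij}\alpha_{ij}Y^{(N)}_i\otimes Y^{(N)}_j\|\le4\|\alpha\|_2$ for a.e.\ $\omega'$, and Fubini assembles this into the asserted a.e.\ $(\omega',\omega'')$ statement.

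The main obstacle is the core estimate: making rigorous that the non-crossing pairings dominate and that every crossing pays a genuine $O(N^{-2})$ factor \emph{relative to} the non-crossing contribution — i.e.\ bounding intermediate words by their free/Fock norms rather than by a crude $\max_\ell\|B^{(N)}_\ell\|^{2p}$ — uniformly enough that the bound survives $p\sim\log(Nk_N)\to\infty$ with $k_N$ itself possibly growing. This is a soft, upper-bound-only version of the Haagerup--Thorbj{\o}rnsen estimate. Everything else — the Fock-space norm computation, the rotation-invariance reduction, Geman's theorem, the conditioning/Fubini bookkeeping, and Gaussian concentration — is routine.
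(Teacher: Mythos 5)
Your overall architecture (condition on $\omega''$, reduce to one Gaussian family with operator coefficients, then concentration and Borel--Cantelli) is coherent, and the arithmetic does land on the constant $4$ via $2(\|\alpha\|_{\mathrm{op}}+\|\alpha\|_2)\le 4\|\alpha\|_2$. But the entire weight of the argument rests on your ``core estimate''
$$\ovl{\lim}_N\, \E\Big\|\sum_\ell Z^{(N)}_\ell\otimes B^{(N)}_\ell\Big\|\le \ovl{\lim}_N\Big(\big\|\sum_\ell (B^{(N)}_\ell)^*B^{(N)}_\ell\big\|^{1/2}+\big\|\sum_\ell B^{(N)}_\ell(B^{(N)}_\ell)^*\big\|^{1/2}\Big),$$
with coefficient matrices of growing size and with $p\sim\log(Nk_N)$, and this is asserted rather than proved. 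The route you sketch for it --- genus expansion, non-crossing pairings reproducing the Fock-space moment, each crossing paying $O(p^2/N^2)$ \emph{relative to the free contribution} --- is exactly the hard point: for a crossing pairing the associated coefficient word $\mathrm{tr}(B_{i_1}B_{i_2}^*\cdots)$ appears in a permuted order and cannot in general be dominated by the Fock-space norm to the power $2p$ without a genuinely delicate argument. This is why Haagerup--Thorbj{\o}rnsen do not prove their upper bound this way (they go through resolvents and a master equation), and why a direct combinatorial proof of such operator-coefficient bounds is a substantial piece of work in its own right. As written, the ``main obstacle'' you flag is not a technicality to be filled in; it is the theorem.

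The paper avoids this entirely by exploiting the symmetry of the bilinear object. After diagonalizing $\alpha$ (as you also do), it writes, for $p=2m$,
$$\E\,\mathrm{tr}|Z|^p=\sum \bar\lambda_{i_1}\lambda_{j_1}\cdots\bar\lambda_{i_m}\lambda_{j_m}\big(\E\,\mathrm{tr}(Y^*_{i_1}Y_{j_1}\cdots Y^*_{i_m}Y_{j_m})\big)^2,$$
notes that every nonvanishing term is nonnegative, bounds one of the two factors by $\E\,\mathrm{tr}|Y|^p$ via H\"older, and recognizes the remaining sum as $\E\,\mathrm{tr}|\sum_j\lambda_jY_j|^p=\E\,\mathrm{tr}|Y|^p$. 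This yields $\E\,\mathrm{tr}|Z|^p\le(\E\,\mathrm{tr}|Y|^p)^2$ with no operator coefficients, no free probability, and no control of crossing terms; the $L_p$-bound for a single $Y^{(N)}$ then comes from Geman plus standard Gaussian concentration, and $p\sim\log N$ finishes as in your sketch. If you want to salvage your approach you would need to either import the Haagerup--Thorbj{\o}rnsen upper bound as a black box (defeating the stated goal of a self-contained derivation) or find the squaring trick above, which is what makes the proof short.
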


\begin{proof}
By (well known) concentration of measure arguments, it is known that \eqref{eq13.4} is essentially the same as the assertion that $\lim_{N\to\infty} {\bb E}\|Y^{(N)}\|_{M_N}=2$. Let $\vp(N)$ be defined by
\[
 {\bb E}\|Y^{(N)}\|_{M_N} = 2+\vp(N)
\]
so that we know $\vp(N)\to 0$.
Again by concentration of measure arguments (see e.g.\ \cite[p.~41]{Led} or \cite[(1.4) or chapter 2]{P02}) there is a constant $\beta$ such that for any $N\ge 1$ and  $p\ge 2$ we have 
\begin{equation}\label{eqg1}
({\bb E}\|Y^{(N)}\|^p_{M_N})^{1/p} \le {\bb E}\|Y^{(N)}\|_{M_N} + \beta(p/N)^{1/2} \le 2+\vp(N) + \beta(p/N)^{1/2}.
\end{equation}
For any $\alpha\in M_n$, we denote
\[
 Z^{(N)}(\alpha)(\omega',\omega'') = \sum\nolimits^n_{i,j=1} \alpha_{ij} Y^{(N)}_i(\omega') \otimes Y^{(N)}_j(\omega'').
\]
Assume $\sum_{ij}|\alpha_{ij}|^2 = 1$. We will show that almost surely
\[
 \lim\nolimits_{N\to\infty} \|Z^{(N)}(\alpha)\| \le 4.
\]
Note that by the invariance of (complex) canonical Gaussian measures under unitary transformations, $Z^{(N)}(\alpha)$ has the same distribution as $Z^{(N)}(u\alpha v)$ for any pair $u,v$ of $n\times n$ unitary matrices. Therefore, if $\lambda_1,\ldots, \lambda_n$ are the eigenvalues of $|\alpha| = (\alpha^*\alpha)^{1/2}$, we have
\[
 Z^{(N)}(\alpha)(\omega',\omega'') \overset{\text{dist}}{=} \sum\nolimits^n_{j=1} \lambda_j Y^{(N)}_j(\omega') \otimes Y^{(N)}_j(\omega'').
\]
We claim that by a rather simple calculation of moments, one can show that for any even integer $p\ge 2$ we have
\begin{equation}\label{eqg2}
 {\bb E} \text{ tr}|Z^{(N)}(\alpha)|^p \le ({\bb E} \text{ tr}|Y^{(N)}|^p)^2.
\end{equation}
Accepting this claim for the moment, we find, a fortiori,  using \eqref{eqg1}:
\[
 {\bb E}\|Z^{(N)}(\alpha)\|^p_{M_N} \le N^2({\bb E}\|Y^{(N)}\|^p_{M_N})^2 \le N^2(2+\vp(N)+ \beta(p/N)^{1/2})^{2p}.
\]
Therefore for any $\delta>0$
\[
 {\bb P}\{\|Z^{(N)}(\alpha)\|_{M_N} > (1+\delta)4\} \le (1+\delta)^{-p} N^2(1+\vp(N)/2 + (\beta/2)(p/N)^{1/2})^{2p}.
\]
Then choosing (say) $p=5(1/\delta) \log( N)$ we find
\[
 {\bb P}\{\|Z^{(N)}(\alpha)\|_{M_N} > (1+\delta)4\} \in O(N^{-2})
\]
and hence (Borel--Cantelli) $\ovl{\lim}_{N\to\infty} \|Z^{(N)}(\alpha)\|_{M_N} \le 4$ a.s.. \\
It remains to verify the claim. Let $Z=Z^N(\alpha)$, $Y=Y^{(N)}$ and $p=2m$. We have
\[
 {\bb E} \text{ tr}|Z|^p = {\bb E} \text{ tr}(Z^*Z)^m = \sum \bar\lambda_{i_1}\lambda_{j_1} \ldots \bar\lambda_{i_m}\lambda_{j_m}({\bb E} \text{ tr}(Y^*_{i_1}Y_{j_1}\ldots Y^*_{i_m}Y_{j_m}))^2.
\]
Note that the only nonvanishing terms in this sum correspond to certain pairings that guarantee that both $\bar\lambda_{i_1}\lambda_{j_1}\ldots \bar\lambda_{i_m}\lambda_{j_m}\ge 0$ and ${\bb E} \text{ tr}(Y^*_{i_1}Y_{j_1}\ldots Y^*_{i_m}Y_{j_m})\ge 0$. Moreover, by H\"older's inequality for the trace we have
\[
 |{\bb E} \text{ tr}(Y^*_{i_1}Y_{j_1}\ldots Y^*_{i_m}Y_{j_m})| \le \Pi({\bb E} \text{ tr}|Y_{i_k}|^p)^{1/p} \Pi({\bb E} \text{ tr}|Y_{j_k}|^p)^{1/p} = {\bb E} \text{ tr}(|Y|^p).
\]
From these observations, we find
\begin{equation}\label{eqg3}
 {\bb E} \text{ tr}|Z|^p \le {\bb E} \text{ tr}(|Y|^p) \sum \bar\lambda_{i_1}\lambda_{j_1}\ldots \bar\lambda_{i_m}\lambda_{j_m} {\bb E} \text{ tr}(Y^*_{i_1}Y_{j_1}\ldots Y^*_{i_m} Y_{j_m})
\end{equation}
but the last sum is equal to ${\bb E} \text{ tr}(|\sum \lambda_jY_j|^p)$ and since $\sum \lambda_jY_j \overset{\text{dist}}{=} Y$   (recall $\sum |\lambda_j|^2 = \sum|\alpha_{ij}|^2=1$) we have
\[
 {\bb E} \text{ tr}\Big(\Big|\sum \alpha_jY_j\Big|^p\Big) = {\bb E} \text{ tr}(|Y|^p),
\]
and hence \eqref{eqg3} implies \eqref{eqg2}.
\end{proof}

\begin{cor}\label{cor13.11}
For any integer $n$ and $\vp>0$, there are $N$ and $n$-tuples of $N\times N$ matrices $\{Y'_i\mid 1\le i\le n\}$ and $\{Y''_j\mid 1\le j\le n\}$ in $M_N$ such that
\begin{align}\label{eq13.15}
 &\sup\left\{\left\|\sum^n_{i,j=1} \alpha_{ij}Y'_i \otimes Y''_j\right\|_{M_{N^2}}\ \Big| \ \alpha_{ij}\in {\bb C}, \ \sum\nolimits_{ij}|\alpha_{ij}|^2 \le 1\right\} \le (4+\vp)\\
\label{eq13.16}
&\min\left\{\frac1{nN} \sum\nolimits^n_1 \text{ \rm tr}|Y'_i|^2, \frac1{nN} \sum\nolimits^n_1 \text{ \rm tr}|Y''_j|^2\right\} \ge 1-\vp. 
\end{align}
\end{cor}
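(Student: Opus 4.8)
\medskip
\noindent\textbf{Proof plan.}
The plan is to deduce the corollary from Theorem~\ref{thm13.10} by taking $Y'_i=Y^{(N)}_i(\omega')$ and $Y''_j=Y^{(N)}_j(\omega'')$ for a suitable pair $(\omega',\omega'')$ and a suitably large $N$. Theorem~\ref{thm13.10} gives, for each \emph{fixed} $n^2$-tuple $(\alpha_{ij})$ and a.e.\ $(\omega',\omega'')$, the bound $\ovl{\lim}_{N}\|Z^{(N)}(\alpha)(\omega',\omega'')\|_{M_{N^2}}\le 4\|\alpha\|_2$; the real work is to upgrade this to a bound uniform over the unit ball of $\ell_2^{n^2}$, so that \eqref{eq13.15} holds for some genuine finite $N$, and to check the easier statement \eqref{eq13.16}.

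\medskip
First I would record the relevant almost sure facts, for our fixed $n$. For each $i$, the quantity $\tfrac1N\text{tr}|Y^{(N)}_i|^2=\tfrac1N\sum_{k,l}|Y^{(N)}_i(k,l)|^2$ has mean $1$ and variance $O(N^{-2})$ (its $N^2$ summands $|Y^{(N)}_i(k,l)|^2$ are i.i.d.\ with mean $N^{-1}$ and variance $N^{-2}$), so Chebyshev plus Borel--Cantelli give $\tfrac1N\text{tr}|Y^{(N)}_i|^2\to1$ a.s.; averaging over $i\le n$, $\tfrac1{nN}\sum_{i=1}^n\text{tr}|Y^{(N)}_i|^2\to1$ a.s., and likewise for the two independent copies. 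Next, by \eqref{eq13.4}, $\max_{i\le n}\|Y^{(N)}_i\|_{M_N}\to2$ a.s. Finally, fix $\delta>0$ and a finite $\delta$-net $\cl F$ of the unit ball of $\ell_2^{n^2}$; applying Theorem~\ref{thm13.10} to each of the finitely many $\alpha\in\cl F$, we get that for a.e.\ $(\omega',\omega'')$ one has $\ovl{\lim}_{N}\|Z^{(N)}(\alpha)(\omega',\omega'')\|_{M_{N^2}}\le4$ simultaneously for all $\alpha\in\cl F$.

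\medskip
The uniformization is then a routine net argument. Given $\alpha$ with $\sum|\alpha_{ij}|^2\le1$, choose $\alpha_0\in\cl F$ with $\|\alpha-\alpha_0\|_2\le\delta$ and write $\|Z^{(N)}(\alpha)\|\le\|Z^{(N)}(\alpha_0)\|+\|Z^{(N)}(\alpha-\alpha_0)\|$. For any $\beta$, the triangle inequality and Cauchy--Schwarz give
$$\|Z^{(N)}(\beta)\|_{M_{N^2}}\le\sum_{i,j}|\beta_{ij}|\,\|Y'_i\|_{M_N}\|Y''_j\|_{M_N}\le\Big(\sum_i\|Y'_i\|_{M_N}^2\Big)^{1/2}\Big(\sum_j\|Y''_j\|_{M_N}^2\Big)^{1/2}\|\beta\|_2,$$
whose prefactor is $\le 5n$ for all large $N$ by the facts above, so $\|Z^{(N)}(\alpha-\alpha_0)\|\le5n\delta$. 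Hence $\ovl{\lim}_{N}\sup_{\|\alpha\|_2\le1}\|Z^{(N)}(\alpha)(\omega',\omega'')\|_{M_{N^2}}\le4+5n\delta$ for a.e.\ $(\omega',\omega'')$, and taking $\delta=\vp/(10n)$ makes the right side $\le4+\vp/2$. Now fix one $(\omega',\omega'')$ in the full-measure set on which everything above holds: for all large $N$ we simultaneously get $\sup_{\|\alpha\|_2\le1}\|Z^{(N)}(\alpha)\|_{M_{N^2}}\le4+\vp$ together with $\tfrac1{nN}\sum_i\text{tr}|Y^{(N)}_i(\omega')|^2\ge1-\vp$ and $\tfrac1{nN}\sum_j\text{tr}|Y^{(N)}_j(\omega'')|^2\ge1-\vp$, which is precisely \eqref{eq13.15}--\eqref{eq13.16} for $Y'_i=Y^{(N)}_i(\omega')$, $Y''_j=Y^{(N)}_j(\omega'')$. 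The main obstacle is exactly this uniformization: one must invoke the a.s.\ operator-norm control \eqref{eq13.4} (not merely the Hilbert--Schmidt-type bound of Theorem~\ref{thm13.10}) to estimate the net error $\|Z^{(N)}(\alpha-\alpha_0)\|$, and the net must be finite so that the exceptional null set can be chosen independently of $\alpha$.
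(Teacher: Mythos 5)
Your proposal is correct and follows essentially the same route as the paper: apply Theorem~\ref{thm13.10} to a finite net in the unit ball of $\ell_2^{n^2}$, pass to the full ball, and handle \eqref{eq13.16} by a law-of-large-numbers argument. The only (harmless) variation is in the net step: the paper uses the standard homogeneity trick costing a factor $(1+\vp)$ with an $n$-independent net, whereas you bound the error term $\|Z^{(N)}(\alpha-\alpha_0)\|$ crudely via \eqref{eq13.4}, which forces a finer net of mesh $\vp/(10n)$ but is equally valid.
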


\begin{proof}

Fix $\vp>0$. Let ${\cl N}_\vp$ be a finite $\vp$-net in the unit ball of $\ell^{n^2}_2$. By 
Theorem \ref{thm13.10} we have for almost all $(\omega',\omega'')$  
\begin{equation}\label{eq13.18bis}
   \ovl{\lim}_{N\to\infty} \sup_{\alpha\in {\cl N}_\vp}\left\|\sum\nolimits^n_{i,j=1} \alpha_{ij} Y'_i\otimes Y''_j\right\|_{M_{N^2}} \le 4,
\end{equation}
We may pass from an $\vp$-net to the whole unit ball in \eqref{eq13.18bis} at the cost of an extra factor $(1+\vp)$ and we obtain \eqref{eq13.15}. As for \eqref{eq13.16}, the strong law of large numbers shows that the left side of \eqref{eq13.16} tends a.s.\ to 1. Therefore, we may clearly  find $(\omega',\omega'')$ satisfying both \eqref{eq13.15} and \eqref{eq13.16}.
\end{proof}
\begin{rem}\label{rem2} A close examination of the proof 
and concentration of measure arguments show
that the preceding corollary holds
with $N$ of the order of $c(\vp) n^2$.
Indeed, we find
a constant $C$ such that
for any $\alpha=(\alpha_{ij})$ in the unit ball
of $\ell_2^{n^2}$ we have (we take $p=N$)
$$ \|Z^{(N)}(\alpha)\|_{L_N(M_N)} \le C $$
from which follows if $A$ is a finite subset of the 
unit ball
of $\ell_2^{n^2}$ that
$$ \|\sup_{\alpha\in A}\|Z^{(N)}(\alpha)\|_{ M_N}\|_N \le C |A|^{1/N}. $$
So if we choose for $A$ an $\vp$-net
in the unit ball
of $\ell_2^{ n^2}$ with $|A|\approx 2^{cn^2}$,
and if $N=n^2$
we still obtain a fortiori

$$ \|\sup_{\alpha\in A}\|Z^{(N)}(\alpha)\|_{ M_N}\|_1 \le C'. $$

\end{rem}
\begin{rem}\label{rem13.12}
Using the well known ``contraction principle'' that says that the variables $(\vp_j)$ are dominated by either $(g^{\bb R}_j)$ or $(g^{\bb C}_j)$, it is easy to deduce that Corollary~\ref{cor13.11} is valid for matrices $Y'_i,Y''_j$ with entries all equal to $\pm N^{-1/2}$, with possibly a different numerical constant in place of 4. Analogously, using the polar factorizations  $Y'_i=U'_i|Y'_i|$, $Y''_j=U''_j|Y''_j |$ and noting
that all the factors $U'_i,|Y'_i| ,U''_j,|Y''_j |$ are independent, we
can also (roughly by integrating over the moduli $|Y'_i| , |Y''_j |$) obtain Corollary~\ref{cor13.11} with unitary matrices $Y'_i,Y''_j$ , with a different numerical constant in place of 4. 
\end{rem}

Let $C_3(n_1,n_2,n_3)$ the  supremum appearing
in \eqref{12} when $t$ runs over all tensors
in $\ell_1^{n_1} \otimes \ell_1 ^{n_2}\otimes \ell_1 ^{n_3} $.  Note that
$C_3(n)= C_3(n,n,n)  $.  Then the proof of Junge etal
as presented in \cite{P-gro} (and incorporating
the results of \cite{P7}) yields
$$C_3(n^4,n^8,n^8)\ge c n^{1/2}.$$
Indeed, the latter proof requires an embedding of $\ell_2^{n^2}$ into $\ell_1^{m}$
and Junge etal use the Rademacher embedding, and hence $m=2^{n^2}$,
but \cite{P7} allows us to use $m=n^4$.

However, if we use the method of Theorem \ref{thm1}    together with Corollary \ref{cor13.11} and the estimate
in Remark \ref{rem2}, then we find
$$C_3(n^2,n^4,n^4)\ge c n^{1/2}.$$

The open problems that remain are:

--get rid of the log factor in Theorem \ref{thm1}.

--improve the lower bound of $C_3(n)$ in Theorem \ref{thm2} to something sharp, possibly $cn^{1/2}$, and similar questions
for $C_3(n_1,n_2,n_3)$.

--find explicit non random examples responsible for 
large values of $C_3(n)$.
\section{Upper bounds }
As far as I know the upperbounds for $C_3(n)$ or  $C_d(n)$ are as follows.

First we have $C_2(n)\le K_G$ (here $K_G$ is the Grothendieck constant).

If $E$ is any operator space,  let $\min(E)$ be the same Banach space but viewed
as embedded in a commutative $C^*$-algebra (i.e. the continuous functions on the dual unit ball).

Let $F$ be an arbitrary operator space, it is easy to show that  we have   isometric  identities
$$F \otimes_{\min} \min(E) =F  \buildrel{\vee} \over {\otimes} E=\min(F) \otimes_{\min} \min(E)   .$$
Moreover, it is known (and easy to check) that if $E=\ell_1^n   $,
the identity map $\min(E) \to E$
has cb norm at most $\sqrt{n}$.  A fortiori, we have
$\|F \otimes_{\min} \min(E)\to F \otimes_{\min} E  \|_{cb} \le \sqrt{n}$ and hence
$$\| F  \buildrel{\vee} \over {\otimes} E \to F \otimes_{\min} E  \|\le \sqrt{n} .$$
This implies that if $E_d= \ell_1^n \otimes_{\min} \cdots \otimes_{\min}  \ell_1^n $ ($d$ times),
then
$$\|E_{d-1}  \buildrel{\vee} \over {\otimes} E \to E_{d}  \|\le \sqrt{n}.$$
Iterating we find
$$ \| E \buildrel{\vee} \over {\otimes}  \cdots \buildrel{\vee} \over {\otimes}  E\to E_d\|\le C_{d-1}(n) \sqrt{n}.$$
Thus we obtain
$$C_d(n)\le K_G n^{(d-2)/2}.$$
A similar argument yields (note that we can use invariance under permutation to reduce to the case
when $n_1\le n_2\le \cdots$) whenever $d\ge 3$:
$$C_d(n_1,\cdots,n_d)\le  K_G \left( {n_1\cdots n_{d-2}}  \right)^{1/2}.$$


\begin{thebibliography}{100}
 \bibitem{BV}  J. Bri\"et and T. Vidick, 
 Explicit lower and upper bounds on the entangled value of multiplayer XOR games,
 arXiv:1108.5647, Quantum Physics (quant-ph).

 
 \bibitem{HT3} 
U. Haagerup and S. Thorbj{\o}rnsen, Random matrices and $K$-theory for exact $C^*$-algebras, \emph{Doc. Math.} {\bf 4} (1999), 341--450 (electronic).

\bibitem{Led} 
M. Ledoux,  \emph{The Concentration of Measure Phenomenon},  American Mathematical Society, Providence, RI, 2001. 
  \bibitem{Jetal} D. P\'erez-Garc\'{\i}a, M.M. Wolf, C. Palazuelos, I. Villanueva, and M. Junge,  Unbounded violation of tripartite Bell inequalities, \emph{Comm. Math. Phys.} {\bf 279} (2008), no. 2, 455--486. 
  
  \bibitem{P02} 
G. Pisier,   Probabilistic methods in the geometry of Banach spaces, \emph{Probability and Analysis (Varenna, 1985)}, 167--241, \emph{Lecture Notes in Math.}, 1206, Springer, Berlin, 1986.
\bibitem{P7}
G.  Pisier,  Remarks on the non-commutative Khintchine inequalities for $0<p<2$, \emph{J. Funct. Anal.}  {\bf  256}  (2009),  no. 12, 4128--4161.
\bibitem{P-gro} G.  Pisier, Grothendieck's Theorem, past and present, Bull. Amer. Math. Soc. To appear.
 \end{thebibliography}
 \end{document}